\newtheorem{theorem}{Theorem}
\newtheorem{prop}[theorem]{Proposition}
\newtheorem{lemma}[theorem]{Lemma}
\newtheorem{corollary}[theorem]{Corollary}
\newtheorem{definition}[theorem]{Definition}
\newtheorem{rmk}[theorem]{Remark}
\begin{document}
\title[polyhedral metrics on triangulated surfaces]
{A characterization of the polyhedral metrics on triangulated surfaces}
\renewcommand{\theequation}{\arabic{section}.\arabic{subsection}.\arabic{equation}}
\numberwithin{equation}{section}
\numberwithin{theorem}{section}

\author{Tianqi Wu}
\maketitle

\begin{abstract}
Given a triangulated surface, a polyhedral metric could be constructed by gluing Euclidean triangles edge-to-edge. We carefully describe the construction and prove that such a polyhedral metric is the only intrinsic metric on the glued surface that preserves the lengths of the curves on the Euclidean triangles. We also discuss the edge length coordinates of the Teichm\"uller space of the polyhedral metrics on a marked surface.
\end{abstract}


\section{Introduction}
Polyhedral metrics and (geodesic) triangulations on surfaces have been widely studied and relied on in mathematical theories and computational geometry.
Given a triangulation of a surface, a polyhedral metric could be constructed by gluing Euclidean triangles edge-to-edge. Such a construction seems obvious.
In this paper,
we carefully describe the construction in details, and rigorously justify fundamental properties of such a constructed polyhedral metrics. Specifically, we defined the ``glued metric'' based on the idea of quotient metric, and proved that arc lengths are preserved after gluing. We also discuss the edge length coordinates of the Teichm\"uller space of polyhedral metrics, briefly explaning the connection with the Teichm\"uller theory of hyperbolic surfaces with cusps and the theory of discrete conformal geometry.

In Section 2, we give a detailed specific construction of $\Delta$-complexes by gluing Euclidean triangles. 
Then in Section 3, we construct polyhedral metrics on triangulated surfaces and justify that the constructed one is indeed a metric. 
In Section 4, we proved the arc lengths preserving property.
In section 5, we define the Teichm\"uller space $T_{PL}$ of polyhedral metrics, and introduced a $\mathbb R^n$-parameterization with the help of the Teichm\"uller space of hyperbolic surfaces with cusps. We also discuss the edge lengths coordianates and the connection with discrete conformal geoemetry.

\section{Triangulations of a closed surface}
Denote

(a) $D$ as a fixed closed regular triangle in $\mathbb R^2$ with the edge length 1,

(b) $V_0$ as the set of the three vertices of $D$,

(c) $E_0$ as the set of the three closed edges of $D$,

(d) $D^o$ as the interior of $D$ in $\mathbb R^2$, and

(e) $e^o=e-V_0$ as an open edge for all $e\in E_0$.
\\
\begin{definition}
\label{definition of a triangulation}
    A \emph{triangulation} $T$ of a connected closed surface $S$ is represented by a finite index set $I=\{1,2,...,n\}$ for some $n\in\mathbb Z_{>0}$ and continuous maps $\sigma_i:D\rightarrow S$ for $i\in I$
such that

(a) $\sigma_i|_{e^o}$ is injective for all $i\in I$ and $e\in E_0$,

(b) $\sigma_i|_{D^o}$ is injective for all $i\in I$,

(c) $S$ is a disjoint union of $\cup_{i}\sigma_i(V_0)$, $\sigma_i(D^o)$ for $i\in I$, and elements in 
$$
\{\sigma_i(e^o):e\in E_0, ~i\in I\},
$$ 
and

(d) if $\sigma_{i_1}(e_1^o)=\sigma_{i_2}(e_2^o)$ for some $i_1,i_2\in I$ and $e_1, e_2\in E_0$, then there exists a linear homeomorphism (i.e., an isometry) $h:e_1\rightarrow e_2$ such that
$$
\sigma_{i_2}(h(x))=\sigma_{i_1}(x)
$$
for all $x\in e_1$.

We denote such a triangulation as $T=(\sigma_i)_{i\in I}$.
\end{definition}

\begin{rmk}
    Notice that by part (c) of Definition 2,1, 
    for all $(e_1,i_1),(e_2,i_2)\in E_0\times I$, $\sigma_{i_1}(e_1^o)=\sigma_{i_2}(e_2^o)$ if
$\sigma_{i_1}(e_1^o)\cap\sigma_{i_2}(e_2^o)\neq\emptyset$.
\end{rmk}

\begin{rmk}
Such a  triangulation $T$ is naturally a $\Delta$-complex, but not necessary a simplicial complex. Two edges of the same triangle could possibly be glued together, and two triangles may be glued along more than one pairs of edges. As a consequence, such a triangulation $T$ is naturally a CW-complex but may not be regular.    
\end{rmk}

Given a triangulation $T=(\sigma_i)_{i\in I}$ of a closed surface $S$,

(a) the maps $\sigma_i$ for $i\in I$ are called the \emph{characteristic maps} of $T$, 

(b) $\sigma_i(v)$ is called a \emph{vertex} of $T$ if $v\in V_0$ and $i\in I$,

(c) $V(T)=\cup_{i\in I}\sigma_i(V_0)$ denotes the set of vertices of $T$,

(d) $\sigma_i(e)$ is called an \emph{edge} of $T$ if $e\in E_0$ and $i\in I$, 

(e) $E=E(T)$ denotes the set of edges of $T$,

\section{Construction of polyhedral metrics}
In this section we will construct polyhedral metrics on $S$, with a given triangulation and prescribed edge lengths.

Given $l\in\mathbb R^{E_0}_{>0}$ satisfying the triangle inequality, there exists a linear map $f:\mathbb R^2\rightarrow\mathbb R^2$ such that $f(D)$ is a Euclidean triangle where the edge length of $f(e)$ is $l_e$ for all $e\in E_0$. 
Let $d[l]$ be the induced metric on $D$, defined by
$$
d[l](x,y)=|f(x)-f(y)|_2.
$$
Notice that the metric $d[l]$ is independent on the choice of $f$.
Suppose $T=(\sigma_i)_{i\in I}$ is a triangulation of $S$, denote $\mathbb R^{E(T)}_\Delta$ as a real-valued function on $E(T)$ satisfying the triangle inequalities. Specifically,
$$
\mathbb R^{E(T)}_\Delta=\{l\in\mathbb R_{>0}^{E(T)}:
l_{\sigma_i(e_1)}+l_{\sigma_i(e_2)}>l_{\sigma_i(e_3)}\text{ if $\{e_1,e_2,e_3\}=E_0$ and $i\in I$}\}.
$$
Suppose $l\in\mathbb R^{E(T)}_\Delta$ and $l^{(i)}\in\mathbb R^{E_0}_{>0}$ denotes the restricted edge length on $\sigma_i(D)$, i.e.,
$$
l^{(i)}_{e}=l_{\sigma_i(e)}
$$
for all $e\in E_0$.

A potential metric on $\sigma_i(D)$ could be
$$
d_i(x,y)=\min\{d[l^{(i)}](x',y'):x'\in\sigma_i^{-1}(x),~y'\in\sigma_i^{-1}(y)\}
$$
for all $x,y\in \sigma_i(D)$. Notice that in general $\sigma_i$ is not injective on $D$ and $\sigma_i^{-1}(x)$ may contain more than one point. Such $d_i$ actually may not be a metric on $\sigma_i(D)$. This is fine and we can still construct a metric $d[l]$ on $S$ as
$$
d[l](x,y)=\inf
\bigg\{
\sum_{j=1}^kd_{i_j}(x_{j-1},x_j):x_{j-1},x_j\in\sigma_{i_j}(D)\text{ for some $i_j\in I$},~
x=x_0,~y=x_k
\bigg\}
$$
for all $x,y\in S$.
To justify that $d[l]$ is indeed a metric on $S$, we need to prove that for all $x,y,z\in S$,
 
(a) $d[l](x,y)<\infty$ ,

(b) $d[l](x,y)\geq0$,

(c) 
$d[l](x,y)=0$ only if $x=y$,

(d) 
$$
d[l](x,y)=d[l](y,x),
$$ 
and

(e) 
$$
d[l](x,z)\leq d[l](x,z)+d[l](y,z).
$$
Here part (a) is indeed Proposition 3.1 below. Part (b) is obvious from the definition. Part (c) is an immediate consequence of Proposition 3.4 below. It is straightforward to verify part (d) and (e) from the definition of $d[l]$.

Furthermore, $d[l]$ is compatible with the quotient topology on $S$ (see Proposition 3.5). 

\begin{prop}
For all $x,y\in S$,
$d[l](x,y)<\infty$, i.e.,
there exists 
$$
x_0,x_1,...,x_k\in S
$$ 
such that  $x_0=x$, $x_k=y$, and for all $j=1,...,k$, $x_{j-1},x_j\in \sigma_{i_j}(D)$ for some $i_j\in I$.
\end{prop}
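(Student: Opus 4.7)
The plan is to prove this by a standard clopen-set argument exploiting the connectedness of $S$ and the finiteness of the triangulation. Write $x \sim y$ if there exists a finite chain $x_0 = x, x_1, \ldots, x_k = y$ of points in $S$ with consecutive points lying in a common triangle $\sigma_{i_j}(D)$. One checks directly from the definition that $\sim$ is an equivalence relation (reflexivity by a length-zero chain, symmetry by reversing, transitivity by concatenation).

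The key observation is that each equivalence class is a union of whole triangles $\sigma_i(D)$. Indeed, if $x \in \sigma_i(D)$ belongs to a class $C$, then for any $y \in \sigma_i(D)$ the length-one chain $x, y$ (both in $\sigma_i(D)$) shows $y \in C$, so $\sigma_i(D) \subset C$. Consequently each class is a finite union of the compact sets $\sigma_i(D)$ (there are only $|I| = n$ of them up to choice). Since $S$ is a surface and hence Hausdorff, each $\sigma_i(D)$ is closed, so every equivalence class is closed in $S$.

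Next, since the equivalence classes partition $S$ into finitely many sets, the complement of any one class is itself a finite union of closed classes, hence closed; so every class is also open. Thus the equivalence classes form a partition of $S$ into finitely many clopen subsets. By connectedness of $S$ (part of the hypotheses on the triangulation), there can be only one class, which means $x \sim y$ for all $x, y \in S$. Unpacking the definition of $\sim$ gives exactly the chain asserted in the proposition, and this chain witnesses $d[l](x,y) < \infty$ since each $d_{i_j}(x_{j-1}, x_j)$ is finite (bounded by the diameter of $\sigma_{i_j}(D)$ in the metric $d[l^{(i_j)}]$).

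I do not anticipate a genuine obstacle here; the only care needed is in verifying that a class really is saturated by the triangles it meets (so that it decomposes as a finite union of compact sets), and in noting that finiteness of $I$ is what makes the complement of a class closed. The result would fail without finiteness of the triangulation, but that is part of Definition 2.1.
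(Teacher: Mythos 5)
Your proof is correct and follows essentially the same route as the paper: the paper fixes $x$ and shows the set $C(x)=\{y: d[l](x,y)<\infty\}$ is saturated by triangles, hence a finite union of the compact sets $\sigma_i(D)$ and therefore clopen, concluding by connectedness of $S$ exactly as you do with your equivalence classes. No changes needed.
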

\begin{proof}
Given $x\in S$, denote
$$
C(x)=\{y\in  S:d[l](x,y)<\infty\}.
$$
It suffices to show that $C(x)$ is the whole space $S$. Since $S$ is connected and $x\in C(x)$, we only need to show that $C(x)$ is both open and closed.
It is easy to see that $\sigma_i(D)\subset C(x)$ if $\sigma_i(D)\cap C(x)\neq\emptyset$.
Let 
$$
J=\bigg\{i\in I:~\sigma_i(D)\cap C(x)\neq\emptyset\bigg\}.
$$
and then
$$
C(x)=\bigcup_{i\in J}\sigma_i(D)
$$ 
is compact and closed.
For all $i\in I-J$, 
$$
C(x)\cap \sigma_i(D)=\emptyset.
$$
So 
$$
\bigcup_{i\in I-J}\sigma_i(D)
$$ 
is the complement of $C(x)$ in $S$ and is compact. So $C(x)$ is open.

\end{proof}

For all $x\in D$,
denote 
$$
K(x)=\cup\{e\in E_0: x\notin e\}
$$
as the union of the edges not containing $x$. In particular, 

(a) if $x\in D^o$, then $K$ is the union of all the three edges of $D$,

(b) if $x\in e^o$ for some $e\in E_0$, then $K$ is the union of the other two edges of $D$, and

(c) if $x\in V_0$, then $K$ is the edge opposite to $x$ in $D$. 
\\
For all $x\in D$ and $i\in I$,
denote 
$r_i(x)>0$ as the distance between $K(x)$ and $x$ in $(D,d[l^{(i)}])$.
For all $x\in S$,
let
$$
r(x)=
\min\bigg\{r_i(x'):i\in I, ~x'\in \sigma_i^{-1}(x)\bigg\}.
$$
\begin{lemma}
If $x,z\in\sigma_i(D)$, $d_i(x,z)<r(x)$, and $z'\in\sigma_j^{-1}(z)$,
then there exists $x'\in \sigma_j^{-1}(x)$ such that
$$
d[l^{(j)}](x',z')=d_i(x,z).
$$ 
\end{lemma}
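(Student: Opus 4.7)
The plan is to start by choosing minimizers $x''\in\sigma_i^{-1}(x)$ and $z''\in\sigma_i^{-1}(z)$ with $d[l^{(i)}](x'',z'')=d_i(x,z)$; these exist because the preimages are finite. The hypothesis $d_i(x,z)<r(x)\leq r_i(x'')$ will then force the straight Euclidean segment from $x''$ to $z''$ inside $(D,d[l^{(i)}])$ to have length strictly less than the distance from $x''$ to $K(x'')$, hence to miss $K(x'')$ entirely. In particular $z''\in D\setminus K(x'')$, so $z''$ lies either in $D^o$, or in some $e^o$ with $e\in E_0$ and $x''\in e$, or $z''=x''$ itself. These three possibilities drive the case split.

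First I would dispatch the easy strata. If $z''\in D^o$, then $z=\sigma_i(z'')\in\sigma_i(D^o)$, and the disjointness in part (c) of Definition 2.1 together with injectivity of $\sigma_i|_{D^o}$ forces $j=i$ and $z'=z''$; setting $x':=x''$ gives the conclusion at once. If $z''=x''$, then $z=x$, so I may take $x':=z'$ and both sides vanish.

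The substantive case is $z''\in e^o$ with $x''\in e$. Here $z\in\sigma_i(e^o)$, so by part (c) of Definition 2.1 the preimage $z'$ must lie in $f^o$ for some $f\in E_0$ with $\sigma_j(f^o)=\sigma_i(e^o)$; part (d) then yields a linear isometry $h:f\to e$ with $\sigma_j=\sigma_i\circ h$ on $f$, and in particular $h(z')=z''$. I would set $x':=h^{-1}(x'')\in f$, which is well defined because $x''\in e$, and which satisfies $\sigma_j(x')=\sigma_i(h(x'))=\sigma_i(x'')=x$. For the distance computation, both $x'$ and $z'$ lie on the single edge $f$, so $d[l^{(j)}](x',z')$ reduces to $l^{(j)}_f\cdot|x'-z'|$ in the unit-length affine coordinates on $f$, and likewise $d[l^{(i)}](x'',z'')=l^{(i)}_e\cdot|x''-z''|$. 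Since $h$ is an isometry the Euclidean factors agree, and because $\sigma_j(f)=\sigma_i(e)$ as closed edges of $T$ (via $h$) the labels match, $l^{(j)}_f=l^{(i)}_e$, giving the required equality.

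The main obstacle will be the edge case: one has to carefully invoke part (d) of Definition 2.1 to produce the correct isometry $h$ matching the preimages, check that $h^{-1}(x'')$ really is a $\sigma_j$-preimage of $x$, and verify that the two edge-length labels agree across the identification. The $D^o$ case and the vertex-identity case follow immediately from the disjoint-union structure in part (c).
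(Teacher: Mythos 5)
Your proposal is correct and follows essentially the same route as the paper: pick minimizing preimages $x'',z''$, use $d_i(x,z)<r(x)\le r_i(x'')$ to force $z''$ off $K(x'')$, dispose of the interior and coincidence cases via the disjoint-union property in part (c), and in the edge case transport $x''$ through the linear edge identification $h$ supplied by part (d), checking that matching edge labels make $h$ an isometry. Your case split is just a slightly more explicit version of the paper's reduction to ``$x\neq z$ and $(z'',i)\neq(z',j)$,'' so no substantive difference.
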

\begin{proof}
Assume
$$
d_i(x,z)=d[l^{(i)}](x'',z'')<r(x)
$$
for some $x''\in\sigma_i^{-1}(x)$, $z''\in\sigma_i^{-1}(z)$.
The conclusion is trivial if $x=z$ or $(z'',i)=(z',j)$.
So we may assume that $x\neq z$ and $(z'',i)\neq(z',j)$. 
By the definition of $r(x)$,
$$
z''\notin  K(x'')\cup\{x''\}\supset V_0.
$$
Then by $(z',i)\neq(z'',j)$ and part (c) of the definition of a triangulation, there exists $e,e'\in E_0$ such that
$$
z''\in e^o,
$$ 
$$
z'\in e',
$$ 
and
$$
\sigma_i(e)=\sigma_j(e').
$$
Furthermore, $e\not\subset K(x'')$ and thus $x''\in e$.

By part (d) of the definition of a triangulation, there exists a linear homeomorphism $h:e\rightarrow e'$ such that $\sigma_j(h(w))=\sigma_i(w)$ for all $w\in e$. 
Such $h$ is an isometry from $(e,d[l^{(i)}])$ to $(e',d[l^{(j)}])$ since

(a) $l_{\sigma_i(e)}=l_{\sigma_{j}(e')}$,

(b) the metrics $d[l^{(i)}],d[l^{(j)}]$ are induced from linear maps to Euclidean triangles in $\mathbb R^2$, and

(c) the map $h$ is linear.
\\
Then $h(z'')=z'$, $h(x'')\in\sigma_j^{-1}(x)$, and
$$
d[l^{(j)}]( h(x''),z')=d[l^{(j)}]( h(x''),h(z''))=d[l^{(i)}](x'',z'')=d_i(x,z).
$$
\end{proof}

\begin{corollary}
If $x,z\in\sigma_i(D)$, $d_i(x,z)<r(x)$, and $z\in\sigma_j(D)$,
then 
$$
x\in\sigma_j(D)
$$ 
and
$$
d_i(x,z)=d_j(x,z).
$$     
\end{corollary}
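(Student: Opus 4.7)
The plan is to deduce this corollary directly from Lemma 3.2 by applying it twice, once in each direction.

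First I would pick $z'\in\sigma_j^{-1}(z)$, which is possible because $z\in\sigma_j(D)$. Since $x,z\in\sigma_i(D)$ and $d_i(x,z)<r(x)$, Lemma 3.2 supplies an $x'\in\sigma_j^{-1}(x)$ with
\[
d[l^{(j)}](x',z')=d_i(x,z).
\]
The very existence of such $x'$ shows that $x\in\sigma_j(D)$, settling the first conclusion. Moreover, by the definition of $d_j$ as a minimum over preimages,
\[
d_j(x,z)\le d[l^{(j)}](x',z')=d_i(x,z).
\]

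For the reverse inequality I would apply the lemma again, now with the roles of $i$ and $j$ swapped. The previous step gives $x,z\in\sigma_j(D)$, and the chain $d_j(x,z)\le d_i(x,z)<r(x)$ verifies the hypothesis $d_j(x,z)<r(x)$ needed to invoke Lemma 3.2 with base index $j$. Picking any $z''\in\sigma_i^{-1}(z)$, the lemma yields an $x''\in\sigma_i^{-1}(x)$ with $d[l^{(i)}](x'',z'')=d_j(x,z)$, hence $d_i(x,z)\le d_j(x,z)$. Combining the two inequalities gives the stated equality.

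The only subtle point is the symmetric second application: one must verify that the hypothesis $d_j(x,z)<r(x)$ of the lemma still holds after swapping indices, but this is immediate from the bound obtained in the first step. So I do not expect any genuine obstacle here; the work was already done in Lemma 3.2, and the corollary is essentially a symmetrization of it.
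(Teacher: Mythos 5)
Your proposal is correct and follows essentially the same route as the paper: one application of Lemma 3.2 to get $x\in\sigma_j(D)$ and $d_j(x,z)\le d_i(x,z)$, then a symmetric application (justified because $d_j(x,z)\le d_i(x,z)<r(x)$) for the reverse inequality. You even make explicit the hypothesis check that the paper compresses into ``by the same reason.''
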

\begin{proof}
 Let $z'\in\sigma_j^{-1}(z)$. By Lemma 3.2, there exists $x'\in\sigma_j^{-1}(x)$ such that
 $$
d_j(x,z)\leq d[l^{(j)}](x',z')=d_i(x,z)<r.
 $$
 By the same reason
 $$
d_i(x,z)\leq d_j(x,z) 
  $$
  and thus
  $$
  d_i(x,z)=d_j(x,z).
  $$
\end{proof}

\begin{prop}
Given $x,y\in S$,
$$
d[l](x,y)\geq r(x)
$$
or
$$
d[l](x,y)=d_i(x,y)
$$
for some $i\in I$ with $x,y\in\sigma_i(D)$.
\end{prop}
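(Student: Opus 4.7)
The plan is to show the contrapositive: if $d[l](x,y) < r(x)$, then some triangle $\sigma_i(D)$ contains both $x$ and $y$ and realizes the distance. The main tool will be an inductive argument along an almost-minimizing chain, combined with the finiteness of the index set $I$.

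First I would establish the following key induction on chain length: if $x=x_0, x_1, \ldots, x_k = y$ is an admissible chain (with $x_{j-1}, x_j \in \sigma_{i_j}(D)$) whose total cost $s_k := \sum_{j=1}^k d_{i_j}(x_{j-1}, x_j)$ is less than $r(x)$, then $x \in \sigma_{i_k}(D)$ and $d_{i_k}(x, x_k) \leq s_k$. The base case $k=1$ is immediate. For the inductive step, assume $x \in \sigma_{i_j}(D)$ and $d_{i_j}(x, x_j) \leq s_j < r(x)$. Since $x_j \in \sigma_{i_j}(D) \cap \sigma_{i_{j+1}}(D)$, Corollary 3.3 applies and gives $x \in \sigma_{i_{j+1}}(D)$ together with $d_{i_{j+1}}(x, x_j) = d_{i_j}(x, x_j) \leq s_j$.

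The obstacle in closing the induction is that $d_{i_{j+1}}$ need not satisfy the triangle inequality on $\sigma_{i_{j+1}}(D)$, so I cannot simply add $d_{i_{j+1}}(x, x_j)$ and $d_{i_{j+1}}(x_j, x_{j+1})$. I circumvent this by lifting to the Euclidean triangle. Pick preimages $x_j'', x_{j+1}'' \in D$ realizing $d_{i_{j+1}}(x_j, x_{j+1})$ under $\sigma_{i_{j+1}}$. Apply Lemma 3.2 (with $i = j = i_{j+1}$, $z = x_j$, $z' = x_j''$) to obtain a preimage $x' \in \sigma_{i_{j+1}}^{-1}(x)$ with $d[l^{(i_{j+1})}](x', x_j'') = d_{i_{j+1}}(x, x_j)$. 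Now the Euclidean triangle inequality on $(D, d[l^{(i_{j+1})}])$ yields
$$
d_{i_{j+1}}(x, x_{j+1}) \leq d[l^{(i_{j+1})}](x', x_{j+1}'') \leq d_{i_{j+1}}(x, x_j) + d_{i_{j+1}}(x_j, x_{j+1}) \leq s_{j+1},
$$
completing the induction.

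Finally, suppose $d[l](x, y) < r(x)$ and fix any $\epsilon > 0$ with $d[l](x, y) + \epsilon < r(x)$. Choose a chain realizing $s_k < d[l](x, y) + \epsilon$; the induction produces an index $i = i(\epsilon) \in I$ with $x, y \in \sigma_i(D)$ and $d_i(x, y) \leq d[l](x, y) + \epsilon$. Because $I$ is finite, some fixed index $i$ serves for a sequence $\epsilon \to 0$, giving $d_i(x, y) \leq d[l](x, y)$. The reverse inequality $d[l](x, y) \leq d_i(x, y)$ is immediate from the definition (take the one-step chain). Hence $d[l](x, y) = d_i(x, y)$ with $x, y \in \sigma_i(D)$, which is the desired alternative.
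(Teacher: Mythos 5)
Your proposal is correct and follows essentially the same route as the paper: both arguments telescope along an admissible chain, using Corollary 3.3 to carry $x$ into each successive triangle and Lemma 3.2 to lift to the Euclidean triangle and recover the restricted triangle inequality $d_i(x,z)+d_i(z,w)\geq d_i(x,w)$ when $d_i(x,z)<r(x)$. The only difference is organizational --- you run a forward induction on near-minimizing chains and then pass to the limit using the finiteness of $I$, whereas the paper argues by contradiction with a maximal index $m$; the substance is the same.
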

\begin{proof}
Let us prove by contradiction and assume that this is not true. 
By the definition of $d[l]$ there exists $y\in S$ and $x_0,...,x_k$ such that

(a) for all $j\in\{1,...,k\}$, $x_{j-1},x_j\in D_{i_j}$ for some $i_j\in I$,

(b) $x_0=x$ and $x_k=y$,

(c) 
$$
\sum_{j=1}^k d_{i_j}(x_{j-1},x_j)<r,
$$
and

(d) 
$$
\sum_{j=1}^k d_{i_j}(x_{j-1},x_j)<d_i(x,y)
$$
if $i\in I$ and $x,y\in \sigma_i(D)$.

Now let $m\geq1$ be the maximum integer in $\{1,...,k\}$ such that 
for all $j=1,...,m$, 

(a) $x\in\sigma_{i_j}(D)$, and

(b) 
$
d_{i_{j}}(x,x_{j-1})<r(x).
$
\\
If $m=k$, then $x,y\in\sigma_{i_m}(D)$ and
$$
d_{i_{m}}(x,x_{m})=d_{i_{m}}(x,y).
$$
\\
If $k'<k$, then
$$
d_{i_{m}}(x,x_{m})\geq r(x),
$$
since otherwise by Corollary 3.3 we have that 
$$
x\in \sigma_{i_{m+1}}(D)
$$ 
and
$$
d_{i_{m+1}}(x,x_{m})<r(x)
$$
contradicting with the maximality of $m$.

So it suffices to show that
$$
\sum_{j=1}^{m} d_{i_j}(x_{j-1},x_j)\geq d_{i_{m}}(x,x_{m}).
$$
Since 
$$
d_{i_1}(x_0,x_1)=d_{i_1}(x,x_1),
$$
it suffices to show that for all $j=2,...,m$,
$$
d_{i_j}(x_{j-1},x_j)\geq d_{i_{j}}(x,x_{j})-d_{i_{j-1}}(x,x_{j-1}),
$$
which by Corollary 3.3 is equivalent to
$$
d_{i_{j}}(x,x_{j-1})+d_{i_j}(x_{j-1},x_j)\geq d_{i_{j}}(x,x_{j}).
$$
Here we prove a general fact that
$$
d_i(x,z)+d_i(z,w)\geq d_i(x,w)
$$
if $x,z,w\in\sigma_i(D)$ and $d_i(x,z)<r(x)$.

Suppose $x'\in \sigma_i^{-1}(x)$, $z',z''\in\sigma_i^{-1}(x)$, $w''\in\sigma_i^{-1}(w)$ 
are such that
$$
d_i(x,z)=d[l^{(i)}](x',z')<r(x)
$$
and
$$
d_i(z,w)=d[l^{(i)}](z'',w'').
$$
Then by Lemma 3.2, there exists $x''\in \sigma^{-1}_i(x)$ such that
$$
d[l^{(i)}](x'',z'')=d[l^{(i)}](x',z')=d_i(x,z).
$$
So
$$
d_i(x,z)+d_i(z,w)
=
d[l^{(i)}](x'',z'')+d[l^{(i)}](z'',w'')
\geq
d[l^{(i)}](x'',w'')\geq d_i(x,w).
$$
\end{proof}
\begin{prop}
Given a triangulation $T$ of $S$ and $l\in\mathbb R^{E(T)}_\Delta$, the metric $d[l]$ is compatible with the original topology on $S$.
\end{prop}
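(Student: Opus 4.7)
The plan is to show that the identity map $(S, \text{original}) \to (S, d[l])$ is a homeomorphism. By Propositions 3.1 and 3.4 together with the routine axioms (b), (d), (e) already verified in Section 3, $d[l]$ is a genuine metric, so $(S, d[l])$ is Hausdorff; and $(S, \text{original})$ is compact, since $S$ is a closed surface. A continuous bijection from a compact space to a Hausdorff space is automatically a homeomorphism, so the whole problem reduces to showing that the identity $(S, \text{original}) \to (S, d[l])$ is continuous.

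To establish this continuity, I would first check that each characteristic map $\sigma_i : D \to (S, d[l])$ is continuous when $D$ carries its Euclidean topology. From the definitions, for any $a, b \in D$ one has
\[
d[l]\bigl(\sigma_i(a), \sigma_i(b)\bigr) \leq d_i\bigl(\sigma_i(a), \sigma_i(b)\bigr) \leq d[l^{(i)}](a, b),
\]
so $\sigma_i$ is $1$-Lipschitz as a map from $(D, d[l^{(i)}])$ into $(S, d[l])$. Since $d[l^{(i)}]$ is obtained from a linear homeomorphism $f:\mathbb R^2 \to \mathbb R^2$, it induces exactly the Euclidean topology on $D$, giving the desired continuity.

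Next I would combine these observations via the quotient topology: the surface topology on $S$ coincides with the quotient topology on $\bigsqcup_{i \in I} D$ modulo the identifications coming from the triangulation. Indeed, the natural map $\bigsqcup_{i \in I} D \to S$ is a continuous surjection from a compact space onto a Hausdorff space, hence closed, hence a quotient map. Because the composition $\bigsqcup_{i \in I} D \to (S, d[l])$ is continuous by the previous step, the universal property of quotient maps delivers continuity of $(S, \text{original}) \to (S, d[l])$, and then the compact-to-Hausdorff principle closes the argument.

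The main obstacle is less any specific calculation than the care needed to compare topologies cleanly. In particular, one has to verify that the surface topology on $S$ really equals the quotient topology coming from the triangulation, rather than some strictly coarser topology; this is exactly what the compact-Hausdorff observation secures. Once that is in place, the $1$-Lipschitz bound and the universal property of quotients handle everything, without any need to exhibit metric balls inside arbitrary original-topology neighborhoods.
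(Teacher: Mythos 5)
Your proof is correct, and its first half takes a genuinely different route from the paper's. Both arguments end the same way: once the identity $(S,\Omega)\to(S,\Omega_d)$ is known to be continuous, the compact-to-Hausdorff principle (using that $d[l]$ is a metric, via Propositions 3.1 and 3.4) upgrades it to a homeomorphism. The difference is in how that continuity, i.e.\ the inclusion $\Omega_d\subset\Omega$, is obtained. The paper works directly with metric balls: for each $x$ and $r>0$ it builds the compact sets $K_i=\{y'\in D: d[l^{(i)}](x',y')\ge r \text{ for all } x'\in\sigma_i^{-1}(x)\}$ and checks that $S-\bigcup_i\sigma_i(K_i)$ is an $\Omega$-open neighborhood of $x$ contained in $B(x,r)$. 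You instead observe that the natural map $\bigsqcup_{i}D\to S$ is a continuous surjection from a compact space to a Hausdorff space, hence closed, hence a quotient map, so the surface topology is the quotient topology; then the chain of inequalities $d[l](\sigma_i(a),\sigma_i(b))\le d_i(\sigma_i(a),\sigma_i(b))\le d[l^{(i)}](a,b)$ makes each $\sigma_i$ a $1$-Lipschitz, hence continuous, map into $(S,d[l])$, and the universal property of quotients finishes the job. Your version is more conceptual and avoids any explicit construction of open sets inside metric balls; the paper's version is more hands-on and makes visible exactly which $\Omega$-neighborhoods sit inside a given ball, which is occasionally useful information in its own right. Both are complete; just make sure you note (as you implicitly do) that the nondegeneracy of each triangle, guaranteed by $l\in\mathbb R^{E(T)}_\Delta$, is what makes $d[l^{(i)}]$ induce the standard topology on $D$.
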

\begin{proof}
    Let $\Omega$ be the original topology on $S$. 
     Let $\Omega_d$ be the topology on $S$ induced by $d[l]$. Specifically
    $$
   \Omega_d=\{U\subset S:
   \text{ for all $x\in U$ there exists $\epsilon>0$ such that $y\in U$ if $d[l](x,y)<\epsilon$}\}. 
    $$
    
    We will first prove that $(S,d[l])$ induces a weaker topology, i.e., $\Omega_d\subset\Omega$.  
It suffices to prove that for all $x\in S$ and $r>0$, 
$$
B(x,r):=\{y\in S: 
d[l](x,y)<r\}
$$ 
contains some open set $U\ni x$ in $(S,\Omega)$.
Fix $x\in S$ and let 
$$
K_i
=\bigg\{y'\in D:d[l^{(i)}](x',y')\geq r\text{ for all $x'\in\sigma_i^{-1}(x)$}\bigg\}
$$
for all $i\in I$. Notice that $K_i=D$ if $x\notin\sigma_i(D)$.
Then $K_i$ is compact and
$$
\sigma_i(D)-\sigma_i(K_i)
\subset
\sigma_i\bigg(\big\{y'\in D:d[l^{(i)}](x',y')<r\text{ for some $x'\in\sigma_i^{-1}(x)$}\big\}\bigg)\subset B(x,r)
$$
for all $i\in I$.
It is easy to see that $x$ is contained in the open set
$$
S-\bigcup_{i\in I}\sigma_i(K_i)\subset \bigcup_{i\in I}(\sigma_i(D)-\sigma_i(K_i))\subset B(x,r).
$$
So we proved that $\Omega_d$ is a weaker topology than $\Omega$.

    Then $\Omega=\Omega_d$ since the identity map 
    $$
    Id_{S}:(S,\Omega)\rightarrow(S,\Omega_d)
    $$ 
    is a homeomorphism, as a continuous bijection from a compact space to a Hausdorff space. 
    
    Here we give a detailed direct argument. We only need to show that any closed subset in $(S,\Omega)$ is compact in $(S,d[l])$ and thus is also closed in $(S,d[l])$. Suppose $Y$ is a closed subset in the compact space $(S,\Omega)$, and 
    $$
    Y=\cup_{\lambda\in\Lambda} U_\lambda
    $$ 
    is an open cover of $Y$ in $(S,\Omega_d)$. Then $Y$ is compact in $(S,\Omega)$ and
    $Y=\cup_{\lambda\in\Lambda} U_\lambda$ is also an open cover in the stronger topology $\Omega$. Then there exists a finite subcover of $Y=\cup_{\lambda\in\Lambda}U_\lambda$. So $Y$ is compact in $\Omega_d$.
\end{proof}

\section{Lengths-preserving property}
It is straightforward to verify that $d[l]$ constructed in the previous section is an \emph{intrinsic  metric}, meaning that the distance between two points $x,y$ is the infimum of the lengths of the curves connecting $x,y$. Specifically,
$$
d[l](x,y)=\inf\bigg\{L(\gamma):\gamma\in C([0,1], S),~\gamma(0)=x,~\gamma(1)=y\bigg\}
$$
where $C([0,1],S)$ is the set of the continuous maps from $[0,1]$ to $S$ and
$L(\gamma)=L(\gamma,d)\in[0,\infty]$ measures the length of $\gamma$ as
$$
L(\gamma)=\sup\bigg\{\sum_{i=1}^kd[l](\gamma(t_{i-1}),\gamma(t_i)):0=t_0<t_1<...<t_k=1\bigg\}.
$$
Since we view $d[l]$ as the metric on a surface glued from Euclidean triangles $(D,d[l^{(i)}])$, $d[l]$ should preserve the lengths of the curves on $(D,d[l^{(i)}])$.
Theorem 4.1 says that $d[l]$ is indeed the only intrinsic metric on $S$ preserving these lengths.

\begin{theorem}
(a) For all $i\in I$ and all continuous maps $\gamma:[0,1]\rightarrow D$, 
$$
L(\gamma,d[l^{(i)}])=L(\sigma_i\circ\gamma,d[l]).
$$

(b) Suppose $d$ is an intrinsic metric on $S$. If
for all $i\in I$ and all continuous maps $\gamma:[0,1]\rightarrow D$, 
$$
L(\gamma,d[l^{(i)}])=L(\sigma_i\circ\gamma,d),
$$
then $d=d[l]$.
\end{theorem}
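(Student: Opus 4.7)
The theorem splits into two parts and I address them in turn. For part~(a), I prove the two inequalities separately. The direction
$$L(\sigma_i\circ\gamma,d[l])\leq L(\gamma,d[l^{(i)}])$$
is immediate from the construction: for all $x,y\in D$ one has
$$d[l](\sigma_i(x),\sigma_i(y))\leq d_i(\sigma_i(x),\sigma_i(y))\leq d[l^{(i)}](x,y),$$
so every partition sum for $\sigma_i\circ\gamma$ in the metric $d[l]$ is bounded above by the corresponding partition sum for $\gamma$ in the metric $d[l^{(i)}]$, and taking suprema yields the inequality.

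The reverse direction is the substantive half of part~(a) and rests on a pointwise local identity: for every $x\in D$ there is an open neighborhood $U_x\subset D$ of $x$ such that
$$d[l^{(i)}](y,z)=d_i(\sigma_i(y),\sigma_i(z))=d[l](\sigma_i(y),\sigma_i(z))$$
for all $y,z\in U_x$. The first equality is obtained by shrinking $U_x$ so that every preimage under $\sigma_i$ of a point in $\sigma_i(U_x)$ that differs from its representative in $U_x$ lies at $d[l^{(i)}]$-distance exceeding $\operatorname{diam}(U_x)$ from $U_x$; this uses the positivity of $r_i(x)$ and requires a case split depending on whether $x\in D^o$, $x\in e^o$, or $x\in V_0$. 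The second equality follows from Proposition~3.4 combined with Corollary~3.3, applied once the common value is below $r(\sigma_i(y))$. With the local identity in hand, compactness of $\gamma([0,1])$ and a Lebesgue number argument furnish a mesh $\lambda>0$ such that for any partition $P$ of $[0,1]$ with mesh below $\lambda$, all consecutive sample pairs lie in a common $U_x$; then $\sum_P d[l^{(i)}]=\sum_P d[l]\circ\sigma_i\leq L(\sigma_i\circ\gamma,d[l])$, and since refining a partition only increases its sum, this bound extends to every partition. Taking suprema gives $L(\gamma,d[l^{(i)}])\leq L(\sigma_i\circ\gamma,d[l])$.

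For part~(b), since both $d$ and $d[l]$ are intrinsic, it suffices to prove $L(\eta,d)=L(\eta,d[l])$ for every continuous $\eta:[0,1]\to S$; then $d(x,y)=\inf_\eta L(\eta,d)=\inf_\eta L(\eta,d[l])=d[l](x,y)$. Given $\eta$, compactness together with the local analysis from part~(a) yields a partition $0=t_0<\ldots<t_k=1$ and indices $i_j\in I$ such that each restriction $\eta_j:=\eta|_{[t_{j-1},t_j]}$ has image inside $\sigma_{i_j}(U_{x_j})$ for some $x_j\in D$ on which $\sigma_{i_j}|_{U_{x_j}}$ is a homeomorphism onto its image. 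Then $\eta_j$ admits the continuous lift $\gamma_j:=(\sigma_{i_j}|_{U_{x_j}})^{-1}\circ\eta_j:[t_{j-1},t_j]\to D$, and the hypothesis together with part~(a) gives
$$L(\eta_j,d)=L(\gamma_j,d[l^{(i_j)}])=L(\eta_j,d[l]).$$
Summing over $j$, and invoking the additivity of length under a partition of the domain, yields $L(\eta,d)=L(\eta,d[l])$.

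The main technical difficulty concentrates in the local analysis supporting part~(a): I need to verify, by a case split according to whether $x\in D^o$, on an open edge, or at a vertex, that sufficiently small neighborhoods $U_x$ genuinely isolate $x$ from its other $\sigma_i$-preimages in the $d[l^{(i)}]$-metric, and to control the interaction with the thresholds $r_i(x)$ and $r(\sigma_i(x))$ uniformly enough for the subsequent Lebesgue-number step to go through. A secondary point is that the partition in part~(b) may need to be further refined to separate the times at which $\eta$ transitions between adjacent triangles, but once the local injectivity of $\sigma_i$ on small neighborhoods is established this is straightforward, and the final assembly via additivity of length is formal.
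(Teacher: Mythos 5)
There is a genuine gap, and it sits exactly where you locate the ``main technical difficulty'': the pointwise local identity underlying your part (a) is false. Take a vertex $v\in V_0$ at which the angle $\alpha$ of the triangle $\sigma_i(D)$ exceeds half of the total cone angle $\theta$ at $\sigma_i(v)$ (e.g.\ a vertex of a simplicial triangulation where the three incident angles are $100^\circ,40^\circ,20^\circ$, so $\theta=160^\circ$ and $\alpha=100^\circ$). For $y,z$ on the two edges of $D$ meeting at $v$, both at distance $\rho$ from $v$, one has $d[l^{(i)}](y,z)=2\rho\sin(\alpha/2)$, whereas the chain that goes around the cone point through the other incident triangles gives $d[l](\sigma_i(y),\sigma_i(z))\le 2\rho\sin\big((\theta-\alpha)/2\big)<2\rho\sin(\alpha/2)$. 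This holds for every $\rho>0$, so no neighborhood $U_v$ has the property you claim, and the thresholds cannot be ``controlled uniformly'': $r(\sigma_i(y))$ and $r''(\sigma_i(y))$ decay linearly as $y$ approaches $\partial D$, at the same rate as $d[l^{(i)}](y,z)$, so shrinking $U_x$ never puts you in the regime where Proposition 3.4 and Corollary 3.3 apply. (Even at points of $e^o$, where the identity is in fact true by local flatness, it does not follow from those two results for pairs $y,z\in D^o$ lying close to the edge but far apart along it.) This is precisely why the paper does not use a Lebesgue-number argument: it builds the partition adaptively (the $\sup A$ argument), requiring only $d[l^{(i)}](\alpha(t_{j-1}),\alpha(t_j))<\max\{r''(\sigma_i\alpha(t_{j-1})),r''(\sigma_i\alpha(t_j))\}$, which forces the vertex itself (where $r''$ is bounded below) to appear as a partition point; the identity does hold for a pair in which one point is the vertex, since radial segments to a cone point are shortest.

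Your part (b) has a second, independent gap. The reduction to $L(\eta,d)=L(\eta,d[l])$ for \emph{every} continuous $\eta$ requires partitioning $\eta$ into finitely many pieces each lying in a single $\sigma_{i_j}(U_{x_j})$, but a continuous curve can cross an edge $\sigma_i(e)$ infinitely often (with crossing times accumulating), and then no finite partition isolates the transitions: any piece containing an accumulation point meets $\sigma_{i}(D^o)$ and $\sigma_{j}(D^o)$ for distinct $i,j$ and so lifts to no single triangle. The paper circumvents both problems by splitting (b) into two inequalities: $d\le d[l]$ follows from the hypothesis applied only to piecewise-straight chains realizing the infimum defining $d[l]$, and $d\ge d[l]$ is proved by contradiction using Arzel\`a--Ascoli to produce a $d$-shortest arc, which is then shown to be piecewise straight with only finitely many transition times. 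You would need to either adopt that reduction to geodesics or otherwise justify that wild curves and cone-point shortcuts can be ignored; as written, both halves of your argument break on concrete examples.
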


\begin{proof}
(a) 
It is straightforward to verify that
$$
L(\gamma,d[l^{(i)}])\geq L(\sigma_i\circ\gamma,d[l]),
$$
since 
$$
d[l^{(i)}](x,y)
\geq d_i(\sigma_i(x),\sigma_i(y))
\geq d[l](\sigma_i(x),\sigma_i(y))
$$
for all $x,y\in D$.

Now we show
$$
L(\gamma,d[l^{(i)}])\leq L(\sigma_i\circ\gamma,d[l]).
$$
It suffices to show that for all
$0=t_0<t_1<...<t_k=1$,
$$
\sum_{j=1}^kd[l^{(i)}](\gamma(t_{j-1}),\gamma(t_j))
\leq L(\sigma_i\circ\gamma,d[l])
=\sum_{j=1}^k L(\sigma_i\circ\gamma|_{[t_{j-1},t_j]},d[l]).
$$
Comparing the terms on both sides and by  a scaling it suffices to show that for all $i\in I$ and $\alpha\in C([0,1],D)$,
$$
d[l^{(i)}](\alpha(0),\alpha(1))\leq L(\sigma_i\circ\alpha,d[l]).
$$
For all $x\in S$, let
$$
r'(x)=
\min\bigg\{\frac{1}{2}d[l^{(i)}](x',x''):i\in I,~x',x''\in\sigma^{-1}_i(x),~x'\neq x''\bigg\},
$$
and
$$
r''(x)=\min\{r(x),r'(x)\}>0.
$$
Then if $x',z'\in D$ and
$$
d[l^{(i)}](x',z')<r''(x),
$$ 
by Lemma 3.2
there exists $x''\in D$ with $\sigma_i(x'')=\sigma_i(x)$ such that
$$
d[l^{(i)}](x'',z')=d_i(\sigma_i(x'),\sigma_i(z'))<r''(x)
$$ 
and then by the definition of $r'(x)$ we have
$x'=x''$ and
$$
d[l^{(i)}](x',z')=d_i(\sigma_i(x'),\sigma_i(z')).
$$

We will find $0=t_0<...<t_k=1$ such that
$$
d[l^{(i)}](\alpha(t_{j-1}),\alpha(t_j))
<\max\{r''(\sigma_i\circ\alpha(t_{j-1})),r''(\sigma_i\circ\alpha(t_{j}))\}
$$
for all $j=1,...,k$,
and then by the above discussion, Proposition 4.4, and Corollary 4.3,
$$
d[l^{(i)}](\alpha(t_{j-1}),\alpha(t_j))
=d_i(\sigma_i(\alpha(t_{j-1})),\sigma_i(\alpha(t_j)))
=d[l](\sigma_i\circ\alpha(t_{j-1}),\sigma_i\circ\alpha(t_j))
$$
and we are done by the triangle inequality on $(D,d[l^{(i)}])$ and the definition of 
$L(\sigma_i\circ\alpha,d[l])$.

Let $A$ be the set of $t_*\in[0,1]$ such that
there exist
$0=t_0<...<t_k=t_*$ with
$$
d[l^{(i)}](\alpha(t_{j-1}),\alpha(t_j))
<\max\{r''(\sigma_i\circ\alpha(t_{j-1})),r''(\sigma_i\circ\alpha(t_{j}))\}.
$$
If $1\in A$ we are done. Suppose $1\notin A$ and $\bar t=\sup A$. Then it is elementary to verify that
$$
\max\bigg\{1,t_*+\frac{1}{2}r''(\sigma_i\circ\alpha(t_*))\bigg\}\in A,
$$
contradicting with $1\notin A$ or $t_*=\sup A$.

(b)
We first prove that    
$$
d(x,y)\leq d[l](x,y)=
\inf
\bigg\{
\sum_{j=1}^kd_{i_j}(x_{j-1},x_j):x_{j-1},x_j\in\sigma_{i_j}(D),~
x=x_0,~y=x_k
\bigg\}.
$$
Suppose $x_0,...,x_k$ are such that $x_0=x$, $x_k=y$, and for all $j=1,...,k$, $x_{j-1},x_j\in\sigma_{i_j}$ for some $i_j\in I$. Then construct a curve $\gamma$ by connecting $x_{j-1},x_j$ by a ``shortest arc'' for all $j=1,...,k$. Specifically, for all $t\in[0,1/k]$ and $j=1,...,k$, let
$$
\gamma \bigg(\frac{j-1}{k}+t\bigg)=\sigma_j\bigg(ktx_j''+(1-kt)x_{j-1}'\bigg)
$$
where $x_{j-1}''\in \sigma_{i_j}^{-1}(x_{j-1}),x_j'\in\sigma_{i_j}^{-1}(x_j)$ are such that
$$
d_{i_j}(x_{j-1},x_j)=d[l^{(i_j)}](x_{j-1}'',x_j').
$$
Then
$$
d(x,y)\leq L(\gamma,d)
$$
$$
=\sum_{j=1}^k L(\gamma|_{[(j-1)/k,j/k]},d)
$$
$$
=\sum_{j=1}^k L\bigg(\big(t\mapsto (ktx_j''+(1-kt)x_{j-1}')\big)|_{[0,1/k]},~d[l^{(i_j)}] \bigg)
$$
$$
=\sum_{j=1}^k d[l^{(i_j)}](x_{j-1}'',x_j')
$$
$$
=\sum_{j=1}^k d_{i_j}(x_{j-1},x_j).
$$
Notice that the above inequality only uses the fact that $\gamma$ is map from $[0,1]$ to $(S,d)$ and does not require that $\gamma$ is continuous from $[0,1]$ to $(S,d)$.

Now we prove that
$$
d(x,y)\geq d[l](x,y)
$$
for all $x,y\in D$.
Since
$
d(x,y)\leq d[l](x,y)
$
for all $x,y\in D$,
$$
Id:(S,d[l])\rightarrow (S,d)
$$ 
is a continuous bijection from a compact space to a Hausdorff space, so it is a homeomorphism and $(S,d)$ is also compact. 
Let us prove by contradiction and assume
$$
d(x,y)< d[l](x,y)
$$
for some $x,y\in D$.
Since $(S,d)$ is compact, by the Arzel\`a–Ascoli Theorem it is easy to see that there exists a shortest arc in $(S,d)$ connecting $x,y$. 
Speicifically, there exists a continuous map $\gamma:[0,1]\rightarrow S$ such that
$\gamma(0)=x$, $\gamma(1)=y$, and $L(\gamma,d)=d(x,y)$.
As a shortest arc, $\gamma$ is injective. Let 
$$
0=t_0\leq t_1<t_2<....<t_{k-1}\leq t_k=1
$$
be such that 
$$
\{t_1,...,t_{k-1}\}=\{t\in[0,1]:\gamma(t)\in V(T)\}.
$$
Then by the triangle inequality and the additivity of the length of a curve,
$$
d[(\gamma(t_{j-1}),\gamma(t_j))]=L(\gamma|_{[t_{j-1},t_j]})<d[l](\gamma(t_{j-1}),\gamma(t_j))
$$
for some $j\in\{1,...,k\}$.
So without loss of generality, by replacing $\gamma$ with $\gamma|_{[t_{j-1},t_j]}$
we may assume that 
$$
\gamma(0,1)\cap V(T)=\emptyset.
$$
Then by the continuity, we can replace $\gamma$ with $\gamma|_{[\epsilon,1-\epsilon]}$ for sufficiently small $\epsilon>0$
we may assume that
$$
\gamma[0,1]\cap V(T)=\emptyset.
$$
If $\gamma(t)\in\sigma_i(D^o)$ for some $i\in I$ and $t\in[0,1]$, then 

(a) there exists a maximal open interval $A=A_t\subset[0,1]$ such that $t\in A$ and
$$
\gamma(A)\subset\sigma_i(D^o),
$$

(b) $\sigma_i^{-1}\circ\gamma (A)$ is an open straight line since $\gamma$ is locally shortest,

(c) 
$$
L(\gamma|_A,d)=L(\sigma_i^{-1}\circ\gamma|_A,d[l^{(i)}])\geq\epsilon
$$
for some constant $\epsilon=\epsilon(S,T,l,\gamma)>0$ by a standard compactness argument,

(d) the length of the interval $A$ is at least $\delta$ for some $\delta=\delta(S,T,l,\gamma)>0$ by the uniform continuity of $\gamma$,

(e) 
$$
\mathcal A=\{A_t:t\in[0,1], ~\gamma(t)\in\sigma_i(D^o)\text{ for some $i\in I$}\}
$$ 
is finite,

(f) $[0,1]-\sup_{A\in\mathcal A}A$ is a finite disjoint union of points $\{t_1,...,t_{k-1}\}$ and closed nondetenerate intervals $\{B_1,...,B_m\}$.

For all $B_j$, suppose $A_j$ is an open interval adjacent to $B_j$, then it is easy to find a map $\tilde\gamma:A_j\cup B_j\rightarrow D$ such that
$$
\sigma_i\circ\tilde\gamma=\gamma
$$
on $A_j\cup B_j$.
Then $\tilde\gamma$ is not locally shortest, and then neither is $\gamma=\sigma_i\circ\tilde\gamma$. So
$\{B_1,...,B_m\}$ is actually empty. Let $t_0=0,t_k=1$ and we have that
$(t_{j-1},t_j)=\emptyset$ or $(t_{j-1},t_j)\in \mathcal A$.
Then
$$
d(x,y)=L(\gamma,d)
=\sum_{j=1}^k L(\gamma|_{[t_{j-1},t_j]},d)
=\sum_{j=1}^k L(\sigma^{-1}_{i_j}\circ\gamma|_{(t_{j-1},t_j)},d[l^{(i)}])
$$
$$
\geq\sum_{j=1}^k d_{i_j}(\gamma(t_{j-1}),\gamma(t_j))
\geq\sum_{j=1}^k d[l](\gamma(t_{j-1}),\gamma(t_j))
\geq d[l](x,y).
$$
\end{proof}

\section{Teichm\"uller space of polyhedral metrics}
$(S,V)$ is called a \emph{marked surface} if $S$ is a connected closed surface and $V$ is a finite subset of $S$. $T$ is a \emph{triangulation of a marked surface} $(S,V)$ if $T$ is a triangulation of $S$ with $V(T)=V$. By an elementary calculation, 
$$
|E(T)|=3|V|-3\chi(S)
$$
where $|X|$ denotes the number of elements in the set $X$.
From now on, we always assume that $|V|-\chi(S)>0$.

A metric $d$ on $S$ is called a \emph{polyhedral metric of a marked surface} $(S,V)$ if there exists a triangulation $T$ of $(S,V)$ and $l\in\mathbb R^{E(T)}_\Delta$ such that $d=d_T[l]$.
There is an equivalent definition of the polyhedral metrics.

A \emph{standard cone of angle $\theta\in(0,\infty)$} is defined to be the quotient topological space
$$
C_\theta=\{(r,t):r\geq0,t\in\mathbb R/\theta\mathbb Z\}/_{(0,t)\sim (0,t')}
$$
with a Riemannian metric $ds^2=dr^2+r^2dt^2$ on $C^{o}_\theta=C_\theta-\{[(0,0)]\}$. This Riemannian metric $ds^2$ naturally induces an intrinsic metric $d_\theta$ on $C_\theta$.

\begin{definition}
    An intrinsic metric $d$ on a closed surface $(S,V)$ is called \emph{polyhedral} if 
    
    (a) for all $x\in S-V$ there exists an open neighborhood of $x$ isometric to some open domain in the Euclidean plane, and

(b) 
for all $x\in V$ there exists an open neighborhood of $x$ isometric to a neighborhood of $[(0,0)]$ in $(C_\theta,d_\theta)$ with $x$ mapped to $[(0,0)]$ for some $\theta>0$.
\end{definition}
It is intuitively correct and routine to verify that the metric $d[l]$ constructed in Section 3 is a polyhedral metric defined in Definition 5.1. It is also well-known that a polyhedral metric defined in Definition 5.1 could always be represented by $d[l]$ for some triangulation $T$ on $(S,V)$ and $l\in\mathbb R^{E(T)}_\Delta$. For example, \cite{bobenko2007discrete} describes a construction of such representation. In a summary,
\begin{theorem}
Given $(S,V)$,
$$
\{\text{polyhedral metrics on $(S,V)$}\}
$$
$$
=
\{d[l]:\text{$l\in\mathbb R^{E(T)}_\Delta$ where $T$ is a triangulation of $(S,V)$}\}
$$
    
\end{theorem}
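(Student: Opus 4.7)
The plan is to prove the two inclusions separately, the forward one by direct unfolding and the reverse one by constructing a geodesic triangulation.

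For the forward inclusion, I would fix a triangulation $T=(\sigma_i)_{i\in I}$ of $(S,V)$ and $l\in\mathbb{R}^{E(T)}_\Delta$ and verify that $d[l]$ satisfies (a) and (b) of Definition 5.1. The key tool is Theorem 4.1(a) together with Lemma 3.2 and Corollary 3.3, which say that short segments on a triangle measure the same length in $d[l]$ as in $d[l^{(i)}]$. For $x\in S-V$ lying in $\sigma_i(D^o)$, a ball of radius less than the distance from $\sigma_i^{-1}(x)$ to $\partial D$ is isometric via $\sigma_i$ to a Euclidean disk. For $x\in\sigma_i(e^o)$ with $e\in E_0$, I would find the (at most two) pairs $(i,e)$ with $\sigma_i(e^o)$ containing $x$, unfold the corresponding triangles in $\mathbb{R}^2$ along the shared edge using the linear gluing map from Definition 2.1(d), and then invoke Theorem 4.1(a) once more to identify a small ball around $x$ with a flat neighborhood in the unfolding. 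The radius estimate follows from the quantity $r(x)$ defined just before Lemma 3.2.

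For $x\in V$, I would enumerate all pairs $(i,v)$ with $v\in V_0$ and $\sigma_i(v)=x$, and cyclically arrange them via the edge-gluing data around $x$. Gluing the corresponding corners of Euclidean triangles in $\mathbb{R}^2$ along successive edges produces a model neighborhood isometric to a neighborhood of the apex in some $C_\theta$, where $\theta$ is the sum of the angles of the triangles at their corners mapping to $x$. Using Theorem 4.1(a) again, the identification sends $d[l]$-lengths to $d_\theta$-lengths on sufficiently short curves, hence produces a local isometry on a small ball.

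For the reverse inclusion — the genuinely harder direction — I would start from a polyhedral metric $d$ in the sense of Definition 5.1 and produce a geodesic triangulation. The idea is: by (a) the complement $S-V$ is locally flat, so every geodesic segment avoiding $V$ is locally a straight Euclidean segment; by (b) neighborhoods of points of $V$ are cones, so shortest curves behave well there too. The plan is to (i) show that any two points of $V$ can be joined by a locally length-minimizing geodesic consisting of a straight segment in each flat chart, (ii) cover $S$ by finitely many such geodesic arcs forming a decomposition of $(S,V)$ into Euclidean triangles meeting edge-to-edge (for example, by following the Delaunay-type construction of Bobenko--Springborn in \cite{bobenko2007discrete}), and (iii) package the resulting combinatorial data as a triangulation $T$ in the sense of Definition 2.1 with edge length function $l$ given by the $d$-lengths of the edges. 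Finally, I would apply part (b) of Theorem 4.1 to conclude $d=d[l]$: the geodesic charts give, for every face, an isometric parametrization by a Euclidean triangle, hence the length-preservation hypothesis of Theorem 4.1(b) holds, and the uniqueness statement forces $d=d[l]$.

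The main obstacle is step (ii) of the reverse inclusion: producing a triangulation which is literally of the form prescribed in Definition 2.1, not merely a cell decomposition. One must ensure that the triangles glue by linear isometries along entire edges (not just partial ones) and that no edge fails to land in $E(T)$. This forces one to choose geodesic arcs carefully — avoiding unnecessary crossings and ensuring every cone point of $V$ is a vertex — and it is precisely here that invoking an existing construction such as \cite{bobenko2007discrete} is the cleanest route; a self-contained proof would require the delicate combinatorial argument of inductively splitting non-triangular flat polygons by geodesic diagonals and checking that the combinatorial gluing matches Definition 2.1(d).
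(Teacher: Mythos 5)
The paper offers no written proof of this theorem: it declares the forward inclusion ``routine to verify'' and cites \cite{bobenko2007discrete} for the reverse inclusion, which is exactly the structure of your sketch. Your proposal is consistent with that and supplies more detail than the paper does --- local unfolding via Lemma 3.2, $r(x)$, and Theorem 4.1(a) for the forward direction, and Theorem 4.1(b) to close the reverse direction --- while correctly identifying the construction of a genuine geodesic triangulation (edge-to-edge, matching Definition 2.1(d)) as the hard step that must be delegated to the cited reference.
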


In many cases, we do not care about the differences between two isometric polyhedral metrics on a marked surface $(S,V)$. So it would be convenient and useful to reduce the space of polyhedral metrics to a finitely dimensional space. 
\begin{definition}
Given a polyhedral metric $d$ on $(S,V)$, denote
$$
[d]=\{f^*d: f\in \mathcal H_0(S,V)
\}
$$ 
where

(a) $f^*d$ is the pullback metric on $S$ defined as 
$$
f^*d(x,y)=d(f(x),f(y)),
$$ 
and

(b) 
$$
\mathcal H_0(S,V)=\{f:S\rightarrow S\big|\text{ $f$ is a homeomorphism and $f|_{S-V}$ is isotopic to $Id|_{S-V}$}\}.
$$
The resulted quotient set 
$$
\{[d]:\text{$d$ is a polyhedral metric on $(S,V)$}\}
$$
is denoted as $T_{PL}(S,V)$ and called the \emph{Teichm\"uller space} of the polyhedral metrics on $(S,V)$. 
\end{definition}

\subsection{Manifold structure of $T_{PL}$ and the edge lengths coordinates}
By a standard argument, one can show that
\begin{prop}
    Given a triangulation $T$ of $(S,V)$, $l\mapsto \big[d_T[l]\big]$ is injective from $\mathbb R^{E(T)}_\Delta$ to $T_{PL}(S,V)$.
\end{prop}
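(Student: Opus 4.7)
The plan is to unpack the assumption $[d_T[l]] = [d_T[l']]$ into the existence of an isometry and then exploit the rigidity of triangulation edges as geodesic segments. Suppose there exists $f \in \mathcal H_0(S,V)$ with $f^*d_T[l'] = d_T[l]$; equivalently, $f: (S, d_T[l]) \to (S, d_T[l'])$ is an isometry. First I would show that $f$ fixes $V$ pointwise. Since $f|_{S-V}$ is isotopic to $Id|_{S-V}$ inside $S-V$, the induced action on the end set of $S-V$ is trivial; as the ends of $S-V$ correspond bijectively to the points of $V$ and $f$ extends continuously to $S$, this forces $f(v) = v$ for all $v \in V$.

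Next, for each edge $e \in E(T)$ with endpoints $v_1, v_2 \in V$, I would argue $f(e) = e$ as a subset of $S$. Because $e$ is a geodesic in $(S, d_T[l])$ of length $l_e$ with interior in $S-V$, the image $f(e)$ is a geodesic in $(S, d_T[l'])$ of length $l_e$ from $v_1$ to $v_2$, also with interior in $S-V$; moreover the isotopy realizing $f|_{S-V} \simeq Id|_{S-V}$ witnesses that $f(e)$ is homotopic to $e$ rel $\{v_1, v_2\}$ in $S$. Once $f(e) = e$ is established, comparing lengths across the isometry yields $l_e = l'_e$, and doing this edge by edge gives $l = l'$.

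The crux is the following rigidity statement: in $d_T[l']$, two geodesic arcs from $v_1$ to $v_2$ with interior in $S-V$ that are homotopic rel endpoints must coincide. I would prove this by lifting to the universal cover $\hat U$ of $S-V$. Since all cone singularities of $d_T[l']$ lie in $V$, the lifted metric on $\hat U$ is locally Euclidean; the canonical lift $\tilde f$ of $f$, obtained from the lifted isotopy to $Id_{\hat U}$, fixes each ideal cusp of $\hat U$ individually. Hence $\tilde e$ and $\tilde f(\tilde e)$ are geodesics in the flat simply connected space $\hat U$ sharing the same pair of cusp endpoints. If they were distinct, then after possibly restricting to a subarc where they first separate and rejoin, they would bound an embedded flat bigon $B$; Gauss--Bonnet on $B$ with $K \equiv 0$, geodesic sides, and $\chi(B) = 1$ yields $(\pi - \alpha_1) + (\pi - \alpha_2) = 2\pi$, forcing $\alpha_1 = \alpha_2 = 0$. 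Since a geodesic leaves a cusp in a unique Euclidean direction (in polar coordinates at the cusp), the two geodesics must agree near each endpoint and therefore everywhere.

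The main obstacle is this uniqueness lemma: one must verify that the region bounded by the two candidate geodesics in $\hat U$ is genuinely a topological disk in the end-compactification, so that Gauss--Bonnet applies cleanly. This requires the simple connectivity of $\hat U$ together with a local analysis of geodesics emanating from cusps; once that is in place, the remainder of the argument reduces to the isometric length comparison described above.
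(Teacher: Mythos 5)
The paper offers no argument for this proposition (it is asserted ``by a standard argument''), so there is nothing to compare against. Your route --- reduce to an isometry $f:(S,d_T[l])\to(S,d_T[l'])$ isotopic to the identity on $S-V$, show $f$ fixes $V$, match each edge $e$ with $f(e)$ as arcs between punctures, and invoke uniqueness of geodesic representatives via a flat Gauss--Bonnet bigon argument in the universal cover of $S-V$ --- is the standard way to prove this, and the outline is essentially correct: $e$ is a $d_T[l']$-geodesic of length $l'_e$ (being a straight segment in each adjacent triangle of $(T,l')$), $f(e)$ is a $d_T[l']$-geodesic of length $l_e$ (being the isometric image of the $d_T[l]$-geodesic $e$), and once $f(e)=e$ you get $l_e=l'_e$ for every edge.

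One statement needs repair. Your rigidity lemma as written --- two geodesic arcs from $v_1$ to $v_2$ with interior in $S-V$ that are ``homotopic rel $\{v_1,v_2\}$ in $S$'' must coincide --- is false: homotopy in $S$ is too weak, because the bigon cut out downstairs may contain cone points of angle less than $2\pi$, in which case Gauss--Bonnet reads $(\pi-\alpha_1)+(\pi-\alpha_2)+\sum_{v}(2\pi-\theta_v)=2\pi$ and permits distinct geodesic sides. (On a polyhedral sphere every pair of arcs between two vertices is homotopic in $S$, yet geodesic arcs between two cone points are far from unique.) The correct hypothesis is that $e$ and $f(e)$ are properly homotopic in $S-V$ as ideal arcs, i.e.\ through a homotopy that extends to the end compactification fixing each end; this is exactly what the isotopy from $f|_{S-V}$ to $Id|_{S-V}$ supplies, and it is also what your proof actually uses when you lift to the universal cover $\hat U$ of $S-V$ and match cusp endpoints via the lifted isotopy. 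So the ``in $S$'' clause should be deleted and the lemma stated for the proper homotopy class in $S-V$; with that fix, and granting the routine verification you already flag (that an innermost complementary region of the two lifted arcs in the completion of $\hat U$ is a disk with flat interior and well-defined corner angles at the possibly infinite-angle cone points), the conclusion $\alpha_1=\alpha_2=0$ forces the two geodesics to leave the apex in the same direction and hence to coincide, closing the argument.
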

By relating to the Teichm\"uller space of hyperbolic metrics, we will briefly show that

(a) $T_{PL}(S,V)$ is naturally homeomorphic to $\mathbb R^{3|V|-3\chi(S)}$ when $|V|-\chi(S)>0$, and

(b) $l\mapsto \big[d_T[l]\big]$ induces an atlas on $T_{PL}(S,V)$.

Given a marked surface $(S,V)$ and a complete hyperbolic metrics $d$ of finite area on $S-V$, denote
$$
[d]=\{f^*d:f\in\mathcal H_0(S-V)\}
$$
where 
$
\mathcal H_0(S-V)
$
is the collection of homeomorphisms from $S-V$ to $S-V$ that are isotopic to $id|_{S-V}$.
We denote 
$$
T(S-V)=\{[d]:
\text{$d$ is a complete hyperbolic metrics on $(S-V)$ of finite area}
\}
$$ 
and recall that $T(S-V)$ is called the \emph{Teichm\"uller space of the complete hyperbolic metrics of finite area} on $S-V$.
It is well-known that $T(S-V)$ is homeomorphic to $\mathbb R^{2|V|-3\chi(S)}$.

Given a polyhedral metric $d$ on $(S,V)$, denote $d_h$ as the unique complete hyperbolic metric on $S-V$ that is conformal to $d$ on $S-V$. By estimating the extremal lengths, it is routine to verify that all the topological ends of $(S-V,d_h)$ are cusp ends. So $d_h$ is a complete hyperbolic metric on $S-V$ of finite area and $[d_h]\in T(S-V)$.

Given a polyhedral metric $d$ on $(S,V)$, denote $A(d)$ as the area of $(S,d)$ and $\theta(d)\in\mathbb R^{V}_{>0}$ as the cone angles on $V$. By the Gauss–Bonnet theorem 
$$
\sum_{v\in V}(2\pi-\theta_v(d))=2\pi\chi(S).
$$

Troyanov (1986)
proved that  every compact Riemann surface carries a unique, up to homothety, Euclidean (flat) conformal metric with prescribed conical singularities of given angles, provided the Gauss-Bonnet relation is satisfied. In other words, he proved that
\begin{theorem}[Troyanov]
$d\mapsto (d_h,\theta(d),A(d))$ is a bijection from $\tilde T_{PL}(S,V)$ to 
$$
\tilde T(S-V)\times GB\times\mathbb R_{>0}
$$
where 

(a) $\tilde T_{PL}(S,V)$ is the set of the polyhedral metric on $(S,V)$,

(b) $\tilde T(S-V)$ is the set of the complete hyperbolic metric of finite area on $S-V$, and

(c) 
$$
GB=\{\theta\in\mathbb R^V_{>0}:
\sum_{v\in V}(2\pi-\theta_i(d))=2\pi\chi(S)
\}.
$$
\end{theorem}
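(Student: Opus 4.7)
The plan is to establish the bijection by constructing an explicit inverse map: given $([d_h], \theta, A)$ in the target, I will produce a unique polyhedral metric on $(S,V)$ realizing this data. The complete hyperbolic metric $d_h$ of finite area on $S-V$ has only cusp ends, so it determines a conformal structure on the closed surface $S$ (the cusps compactify to the marked points $V$). I would fix a smooth reference Riemannian metric $g_0$ in this conformal class, and search for the polyhedral metric in the form $g = e^{2u}g_0$, where $u \in C^\infty(S-V)$ has prescribed logarithmic singularities at $V$.

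The defining requirements of Definition 5.1 translate into an elliptic problem. Flatness of $g$ on $S-V$, via the conformal transformation law $K_g = e^{-2u}(K_{g_0} - \Delta_{g_0}u)$, becomes the linear equation $\Delta_{g_0} u = K_{g_0}$ on $S-V$. The cone-angle condition at $v\in V$ forces the local asymptotic
$$
u(z) = \left(\frac{\theta_v}{2\pi} - 1\right)\log|z-v| + O(1)
$$
in any conformal coordinate centered at $v$. Packaged as a single distributional equation on the closed surface $S$, this becomes
$$
\Delta_{g_0} u = K_{g_0} - \sum_{v\in V}(2\pi - \theta_v)\delta_v,
$$
whose solvability condition (integrating against $1$) is exactly $\sum_v(2\pi - \theta_v) = 2\pi\chi(S)$ by Gauss--Bonnet applied to $g_0$, i.e., $\theta \in GB$.

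I would then solve this Poisson-type equation by subtracting explicit logarithmic model functions $\sum_v (\theta_v/2\pi - 1)\chi_v(z)\log r_v(z)$, where $\chi_v$ are smooth cutoffs supported in conformal charts and $r_v$ is the $g_0$-distance to $v$. This reduces the problem to a smooth Poisson equation $\Delta_{g_0}\tilde u = f$ on $S$ with $\int_S f\, dA_{g_0} = 0$, and standard Hodge/Green-function theory on the closed surface yields a $C^\infty$ solution $\tilde u$, unique up to an additive constant $c$. The corresponding area $A(g) = \int_S e^{2u}dA_{g_0}$ depends strictly monotonically on $c$ and sweeps out $(0,\infty)$, so the constraint $A(g) = A$ pins down $c$ uniquely. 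Injectivity of $d \mapsto (d_h, \theta(d), A(d))$ then follows automatically: two polyhedral metrics with the same image share the same conformal class (hence compatible $g_0$), the same distributional source (hence the same $u$ up to a constant), and the same area (which fixes the constant).

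The last step is to verify that the constructed $g = e^{2u}g_0$ is genuinely polyhedral in the sense of Definition 5.1: smoothness and flatness on $S-V$ are immediate from the equation and elliptic regularity, while near each $v$ the substitution $w = z^{\theta_v/2\pi}$ converts $e^{2u}|dz|^2$ into the model flat cone $C_{\theta_v}$, so a neighborhood of $v$ is isometric to a neighborhood of the cone point in $(C_{\theta_v}, d_{\theta_v})$. Theorem 5.2 then places $g$ into $\tilde T_{PL}(S,V)$. The main obstacle, though relatively mild in this linear flat setting, is the singular elliptic analysis: justifying that the Poisson equation with Dirac-delta sources on a closed surface admits a solution with exactly the prescribed logarithmic asymptotics and full smoothness off $V$. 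This is handled by the parametrix/Green's-function construction combined with elliptic regularity; the deeper Moser--Trudinger machinery Troyanov developed is needed only in the negatively curved analogue, not here.
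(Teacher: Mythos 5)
Your proposal addresses a statement that the paper itself does not prove: Theorem 5.5 is quoted as Troyanov's theorem and used as a black box, so there is no in-paper argument to compare against. What you have written is, in substance, the standard proof from the literature for the flat case, and it is the right argument: the problem linearizes to the distributional Liouville equation $\Delta_{g_0}u = K_{g_0} - \sum_{v\in V}(2\pi-\theta_v)\delta_v$, whose solvability condition is exactly the Gauss--Bonnet constraint $\theta\in GB$; a parametrix/Green's-function construction gives a solution unique up to an additive constant, and the prescribed area pins down that constant. Two points deserve more care than your sketch gives them. First, near a cone point the conformal factor is $(\theta_v/2\pi-1)\log|z|$ plus a \emph{harmonic} remainder $h$, not merely $O(1)$; to identify the resulting metric with the model cone $C_{\theta_v}$ you should write $h=\mathrm{Re}\,H$ with $H$ holomorphic and pass to the developing coordinate $w=\int z^{\theta_v/2\pi-1}e^{H}\,dz$, rather than the bare substitution $w=z^{\theta_v/2\pi}$, which only treats the exact model. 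Second, your injectivity argument presupposes the converse local statement: that an arbitrary polyhedral metric in the sense of Definition 5.1, written conformally as $e^{2u}g_0$ on $S-V$, has $u$ with \emph{exactly} the logarithmic singularity of coefficient $\theta_v/2\pi-1$ at each $v$, so that it satisfies the same distributional equation. This removable-singularity step is standard but is precisely where the paper's remark that the ends of $(S-V,d_h)$ are cusps (and the local structure of cone metrics) must be invoked. With those two refinements your argument is a complete and correct proof of the flat Troyanov theorem, and your observation that the deeper nonlinear (Moser--Trudinger) machinery is unnecessary here is accurate.
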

As a consequence, we have that
\begin{corollary}
    $[d]\mapsto ([d_h],\theta(d),A(d))$ is a bijection from $ T_{PL}(S,V)$ to 
$$
 T(S-V)\times GB\times\mathbb R_{>0}.
$$
\end{corollary}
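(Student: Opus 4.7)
The plan is to descend Troyanov's bijection $\tilde\Phi: \tilde T_{PL}(S,V) \to \tilde T(S-V) \times GB \times \mathbb R_{>0}$, $d \mapsto (d_h, \theta(d), A(d))$, through the group actions defining the quotients $T_{PL}(S,V)$ and $T(S-V)$. The first step is to verify that $\tilde\Phi$ is equivariant with respect to the restriction homomorphism $\rho: \mathcal H_0(S,V) \to \mathcal H_0(S-V)$, $f \mapsto f|_{S-V}$, together with the trivial action on $GB \times \mathbb R_{>0}$. The invariants $\theta$ and $A$ are preserved because $f:(S,f^*d)\to(S,d)$ is a metric isometry. For the conformal factor, $f|_{S-V}$ is a metric isometry between $(S-V, f^*d)$ and $(S-V, d)$, hence takes local flat charts to local flat charts and is a biholomorphism of the induced Riemann-surface structures on the punctured surface; pulling back the unique conformally associated complete hyperbolic metric then gives $(f^*d)_h = (f|_{S-V})^* d_h$. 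These identities produce a well-defined descended map $\bar\Phi: T_{PL}(S,V) \to T(S-V) \times GB \times \mathbb R_{>0}$, $[d] \mapsto ([d_h], \theta(d), A(d))$.

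Surjectivity of $\bar\Phi$ is immediate from Troyanov: given $([d_h'],\theta,A)$ in the target, choose any representative $d_h'$, apply $\tilde\Phi^{-1}$ to obtain a polyhedral metric $d$ with $\tilde\Phi(d)=(d_h',\theta,A)$, and take $[d]$. For injectivity, suppose $\bar\Phi([d_1]) = \bar\Phi([d_2])$; then $\theta(d_1) = \theta(d_2)$, $A(d_1) = A(d_2)$, and $[d_{1,h}] = [d_{2,h}]$ in $T(S-V)$. The last equality yields some $g \in \mathcal H_0(S-V)$ with $g^* d_{2,h} = d_{1,h}$, and everything reduces to promoting this $g$ to an element $\tilde g \in \mathcal H_0(S,V)$.

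The main obstacle is therefore the surjectivity of the restriction map $\rho$: I need every homeomorphism of $S-V$ isotopic to the identity to extend to a homeomorphism of $S$ fixing $V$ pointwise whose restriction to $S-V$ is still isotopic to the identity. Since $V$ is a finite set of isolated points, an isotopy of $g$ to $Id|_{S-V}$ forces each end of $S-V$ (one per puncture) to be preserved, so $g$ extends uniquely to a topological homeomorphism $\tilde g$ of $S$ with $\tilde g(v)=v$ for each $v\in V$. The isotopy itself extends across each puncture through a standard local model such as $(z,t)\mapsto z\, e^{2\pi i \rho(|z|)t}$ on a disk neighborhood, which extends continuously across $z=0$. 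Once $\tilde g \in \mathcal H_0(S,V)$ is in hand, equivariance yields $\tilde\Phi(\tilde g^* d_2) = (g^* d_{2,h}, \theta(d_2), A(d_2)) = \tilde\Phi(d_1)$, and Troyanov's injectivity forces $\tilde g^* d_2 = d_1$, so $[d_1] = [d_2]$ in $T_{PL}(S,V)$. The only substantive topological input is the extension of homeomorphisms and their isotopies across the punctures $V$; everything else is formal equivariance bookkeeping.
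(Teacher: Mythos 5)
Your argument is correct and is exactly the deduction the paper intends: the paper offers no proof beyond ``As a consequence'' of Theorem 5.5, and your descent of Troyanov's bijection through the two quotient actions, with the one non-formal ingredient being the extension of an end-preserving homeomorphism of $S-V$ (and its isotopy) across the finitely many punctures, supplies precisely the missing details.
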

This bijection naturally assign $T_{PL}(S,V)$ a topology that is homeomorphic to $\mathbb R^{3|V|-3\chi(S)}$.
We will see that for any triangulation $T$ of $(S,V)$, $l\mapsto d_T[l]$ is an embedding from $\mathbb R^{E(T)}_\Delta$ to $T_{PL}(S,V)$. So $l\mapsto d_T[l]$ naturally induces an atlas on $T_{PL}(S,V)$. To see that $l\mapsto d_T[l]$ is an embedding, by Proposition 5.4 and the open mapping theorem it suffices to show that $l\mapsto (d_T[l])_h$
is continuous. Fix $\bar l\in\mathbb R^{E(T)}_\Delta$ and then
$$
Id|_{S}:(S,d_T[\bar l])\rightarrow (S,d_T[l])
$$
is $K_l$-quasiconformal where
$
K_l\rightarrow1
$
as $l\rightarrow\bar l$.
So
the Teichm\"uller distance between $[(d_T[l])_h]$ and $[(d_T[\bar l])_h]$ goes to $0$ as $l\rightarrow\bar l$.

\subsection{Another approach using discrete conformal geometry}
Gu et al. \cite{gu2018discrete} introduced the discrete uniformization theorem in the Teichm\"uller space of polyhedral metrics on marked surfaces. This discrete uniformization theorem gives another $\mathbb R^n$-parameterization of $T_{PL}$, by a similar argument.
Related work on discrete conformality and geodesic triangulations could be found in 
\cite
{wu2022surface,
luo2023convergence,
wu2021fractional,
luo2023deformation,
dai2023rigidity,
wu2023computing,
izmestiev2023prescribed,
luo2021deformation2,
luo2019koebe,
wu2020convergence,
luo2021deformation,
luo2022discrete,
wu2015rigidity,
sun2015discrete,
gu2018discrete,
gu2018discrete2,
gu2019convergence
}.

\bibliography{sreference}

\providecommand{\bysame}{\leavevmode\hbox to3em{\hrulefill}\thinspace}
\providecommand{\MR}{\relax\ifhmode\unskip\space\fi MR }
\providecommand{\MRhref}[2]{%
  \href{http://www.ams.org/mathscinet-getitem?mr=#1}{#2}
}
\providecommand{\href}[2]{#2}
\begin{thebibliography}{10}

\bibitem{bobenko2007discrete}
Alexander~I Bobenko and Boris~A Springborn, \emph{A discrete laplace--beltrami
  operator for simplicial surfaces}, Discrete \& Computational Geometry
  \textbf{38} (2007), no.~4, 740--756.

\bibitem{dai2023rigidity}
Song Dai and Tianqi Wu, \emph{Rigidity of the delaunay triangulations of the
  plane}, arXiv preprint arXiv:2305.02609 (2023).

\bibitem{gu2019convergence}
David Gu, Feng Luo, and Tianqi Wu, \emph{Convergence of discrete conformal
  geometry and computation of uniformization maps}, Asian Journal of
  Mathematics \textbf{23} (2019), no.~1, 21--34.

\bibitem{gu2018discrete2}
Xianfeng Gu, Ren Guo, Feng Luo, Jian Sun, Tianqi Wu, et~al., \emph{A discrete
  uniformization theorem for polyhedral surfaces ii}, Journal of Differential
  Geometry \textbf{109} (2018), no.~3, 431--466.

\bibitem{gu2018discrete}
Xianfeng~David Gu, Feng Luo, Jian Sun, and Tianqi Wu, \emph{A discrete
  uniformization theorem for polyhedral surfaces}, Journal of Differential
  Geometry \textbf{109} (2018), no.~2, 223--256.

\bibitem{izmestiev2023prescribed}
Ivan Izmestiev, Roman Prosanov, and Tianqi Wu, \emph{Prescribed curvature
  problem for discrete conformality on convex spherical cone-metrics}, arXiv
  preprint arXiv:2303.11068 (2023).

\bibitem{luo2022discrete}
Feng Luo, Jian Sun, and Tianqi Wu, \emph{Discrete conformal geometry of
  polyhedral surfaces and its convergence}, Geometry \& Topology \textbf{26}
  (2022), no.~3, 937--987.

\bibitem{luo2019koebe}
Feng Luo and Tianqi Wu, \emph{Koebe conjecture and the weyl problem for convex
  surfaces in hyperbolic 3-space}, arXiv preprint arXiv:1910.08001 (2019).

\bibitem{luo2021deformation}
Yanwen Luo, Tianqi Wu, and Xiaoping Zhu, \emph{The deformation space of
  geodesic triangulations and generalized tutte's embedding theorem}, arXiv
  preprint arXiv:2105.00612 (2021).

\bibitem{luo2021deformation2}
\bysame, \emph{The deformation spaces of geodesic triangulations of flat tori},
  arXiv preprint arXiv:2107.05159 (2021).

\bibitem{luo2023convergence}
\bysame, \emph{The convergence of discrete uniformizations for genus zero
  surfaces}, Discrete \& Computational Geometry (2023), 1--24.

\bibitem{luo2023deformation}
\bysame, \emph{The deformation space of delaunay triangulations of the sphere},
  Pacific Journal of Mathematics \textbf{323} (2023), no.~1, 115--127.

\bibitem{sun2015discrete}
Jian Sun, Tianqi Wu, Xianfeng Gu, and Feng Luo, \emph{Discrete conformal
  deformation: algorithm and experiments}, SIAM Journal on Imaging Sciences
  \textbf{8} (2015), no.~3, 1421--1456.

\bibitem{wu2015rigidity}
Tianqi Wu, Xianfeng Gu, and Jian Sun, \emph{Rigidity of infinite hexagonal
  triangulation of the plane}, Transactions of the American Mathematical
  Society \textbf{367} (2015), no.~9, 6539--6555.

\bibitem{wu2021fractional}
Tianqi Wu and Xu~Xu, \emph{Fractional combinatorial calabi flow on surfaces},
  arXiv preprint arXiv:2107.14102 (2021).

\bibitem{wu2023computing}
Tianqi Wu and Shing-Tung Yau, \emph{Computing harmonic maps and conformal maps
  on point clouds.}, Journal of Computational Mathematics \textbf{41} (2023),
  no.~5.

\bibitem{wu2020convergence}
Tianqi Wu and Xiaoping Zhu, \emph{The convergence of discrete uniformizations
  for closed surfaces}, arXiv preprint arXiv:2008.06744 (2020).

\bibitem{wu2022surface}
Yingying Wu, Tianqi Wu, and Shing-Tung Yau, \emph{Surface eigenvalues with
  lattice-based approximation in comparison with analytical solution}, arXiv
  preprint arXiv:2203.03603 (2022).

\end{thebibliography}
\bibliographystyle{amsplain}

\end{document}